   \definecolor{cites}{rgb}{0.50 , 0.00 , 0.00}  
   \definecolor{urls} {rgb}{0.00 , 0.00 , 0.50}  
   \definecolor{links}{rgb}{0.00 , 0.00 , 0.50}   
\newcommand{\cS}{{\mathcal S}}
\newcommand{\sN}{{\mathbb N}}
\newcommand{\sQ}{{\mathbb Q}}
\newcommand{\sR}{{\mathbb R}}
\newcommand{\sZ}{{\mathbb Z}}
\newcommand\eps\varepsilon
\newcommand\conv{{\rm conv}}
\newcommand\vol{{\rm vol}}
\newcommand\inte{{\rm int}}
\newcommand\R{{\mathbb R}}
\newcommand\Q{{\mathbb Q}}
\newcommand\Z{{\mathbb Z}}
\newcommand\N{{\mathbb N}}
\newcommand\T{{\mathcal T}}
\newtheorem{theorem}{Theorem}[section]
\newtheorem{lemma}[theorem]{Lemma}
\newtheorem{corollary}[theorem]{Corollary}
\newtheorem{proposition}[theorem]{Proposition}
\newenvironment{remark}
 {\par\noindent\refstepcounter{theorem}{\bf Remark \thetheorem}\ }
 {\raisebox{1mm}{\framebox{}}\pagebreak[2]}
\newenvironment{example}
 {\par\noindent\refstepcounter{theorem}{\bf Example \thetheorem}\ }
 {\raisebox{1mm}{\framebox{}}\pagebreak[2]}
\newenvironment{proof}
 {\par\noindent{\bf Proof.}}
 {\rule{2mm}{2mm}\pagebreak[2]}
\begin{document}
\title{\bf On the integer points in a lattice polytope:\\$n$-fold Minkowski sum and boundary}
\author{{\sc Marko Lindner}\quad and\quad {\sc Steffen Roch}}
\date{\today}
\maketitle
\begin{quote}
\renewcommand{\baselinestretch}{1.0}
\footnotesize {\sc Abstract.} In this article we compare the set of
integer points in the homothetic copy $n\Pi$ of a lattice polytope
$\Pi\subseteq\R^d$ with the set of all sums $x_1+\cdots+x_n$ with
$x_1,...,x_n\in \Pi\cap\Z^d$ and $n\in\N$. We give conditions on the
polytope $\Pi$ under which these two sets coincide and we discuss
two notions of boundary for subsets of $\Z^d$ or, more generally,
subsets of a finitely generated discrete group.
\end{quote}

\noindent
{\it Mathematics subject classification (2000):} 52B20; 52C07, 65J10\\
{\it Keywords and phrases:} lattice polytopes, integer points, boundary,
projection methods
\section{Introduction}
Throughout, we denote by $\sN, \, \sZ, \, \sQ$ and $\sR$ the
natural, integer, rational and real numbers, respectively, and we
fix a number $d \in \sN$. For arbitrary $n \in \sN$, we compare the
set of all integer points in the homothetic copy $n \Pi$ of a
lattice polytope $\Pi \subseteq \sR^d$ (that means $\Pi$ is the
convex hull of a finite number of points in $\sZ^d$) with the set of
sums $x_1 + \cdots + x_n$ with $x_1, \, \ldots, \, x_n \in \Pi \cap
\sZ^d$. It is easy to see that the latter set is always contained in
the first -- but in general, they are different. We give conditions
on the polytope under which these two sets coincide and we discuss
two notions of boundary for subsets of $\sZ^d$ and, more generally,
of a finitely generated (not necessarily commutative) discrete
group.

The motivation for this paper stems from the study of projection
methods for the approximate solution of operator equations. Let $A$
be a bounded linear operator acting on a Banach space. To solve the
operator equation $Au = f$ numerically, one chooses a sequence
$(Q_n)$ of projections (usually assumed to be of finite rank and to
converge strongly to the identity operator) and replaces the
equation $Au=f$ by the sequence of the linear systems $Q_n A Q_n u_n
= Q_n f$. What one expects is that, under suitable conditions, the
solutions $u_n$ of these systems converge to the solution $u$ of the
original equation. If the Banach space on which $A$ lives consists
of functions on a countable set $Y$ (think of a sequence space
$l^2(Y)$, for example) then it is convenient to choose an increasing
sequence $(Y_n)$ of finite subsets of $Y$ and to specify $Q_n$ as
the operator $P_{Y_n}$ which restricts a function on $Y$ to $Y_n$.

In this paper, we will be concerned with the case when $Y$ is a
finitely generated discrete group $\Gamma$. In case $\Gamma$ is the
additive group $\Z^d$, a typical (rather geometric) approach
\cite{RaRoSi:Book,Roch:FSM,Li:Habil,Li:FSMsubs} to design a projection method
is to fix a compact set (for example a lattice polytope) $\Pi
\subseteq \R^d$ and to consider the operator $P_{\Pi_n}$ of
restriction to (likewise, the operator of multiplication by the
characteristic function of) the set
\begin{equation} \label{eq:Pn}
\Pi_n := (n \Pi) \cap \Z^d
\end{equation}
for $n \in \sN$. If we assume that the origin is in the interior of
$\Pi$ then it follows that
\begin{equation} \label{eq:incmethod}
\Pi_n \subseteq \Pi_{n+1} \quad \textrm{for all} \; n \in \N \qquad
\textrm{and} \qquad \bigcup_{n \in \N} \Pi_n = \sZ^d,
\end{equation}
whence the sequence $(\Pi_n)_{n\in\N}$ gives rise to an increasing
sequence of finite-dimensional projection operators on $l^2(\sZ^d)$
that strongly converges to the identity operator as $n \to \infty$.

In the case of a general finitely generated group $\Gamma$, this
geometric approach is clearly infeasible. A natural idea here is to
fix a finite set $\Omega \subseteq \Gamma$ of generators of $\Gamma$
(that is, we assume that $\Omega$ generates $\Gamma$ as a
semi-group) and to consider the set
\begin{equation} \label{eq:Pn'}
\Omega_n :=  \{x_1 x_2 \cdots x_n : x_1, \, x_2, \, \dots, \, x_n \in \Omega \}
\end{equation}
of all words of length $n \ge 1$ over the alphabet $\Omega$ in place
of \eqref{eq:Pn}. If $\Omega$ is symmetric (in the sense that
$x^{-1} \in \Omega$ if $x \in \Omega$), contains the identity
element $e$ of $\Gamma$ (in analogy to the above approach in $\Z^d$)
and if $\Gamma$ is equipped with the word metric over $\Omega$ then
$\Omega_n$ is the disk of radius $n$ in $\Gamma$ around the identity
$e$. Moreover, one gets that also $(\Omega_n)_{n \in \N}$ yields an
increasing sequence of finite-dimensional projections with strong
limit identity, i.e., \eqref{eq:incmethod} holds with $\Pi_m$
replaced by $\Omega_m$ and $\Z^d$ by $\Gamma$.

A natural question before proposing the latter approach for general
finitely generated groups $\Gamma$ is whether or not the geometric
approach \eqref{eq:Pn} and the algebraic approach \eqref{eq:Pn'}
coincide if we have $\Gamma = \Z^d$ and use $\Omega := \Pi \cap
\Z^d$ as finite set of generators in \eqref{eq:Pn'} with a symmetric
lattice polytope $\Pi \subseteq \R^d$ containing the origin in its
interior. This question is discussed in Section \ref{sec:Pn-vs-Pn'}.
We will give conditions on the polytope $\Pi$ under which
\eqref{eq:Pn} and \eqref{eq:Pn'} coincide -- but in general they do
not.

In Section \ref{sec:bdry} we address a further question that arises
in the study of projection methods. In \cite{Roch:FSMgroup} it has
been pointed out that the ``boundaries'' $\partial_\Omega \Omega_n$
(if appropriately defined) of the sets $\Omega_n$ (or $\Pi_n$) hold
the key to the answer to whether or not the projection method
\[ 
P_{\Omega_n} A P_{\Omega_n} u_n = P_{\Omega_n} f, \qquad n \in \sN,
\] 
yields stable approximations $u_n$ to the solution $u$ of $Au=f$. In
Section \ref{sec:bdry} we give special emphasis to the question
whether the ``algebraic boundaries'' $\partial_\Omega \Omega_n$
coincide with the corresponding ``numerical boundaries'' $\Omega_n
\setminus \Omega_{n-1}$.
\section{Lattice polytopes, enlargements and integer points} \label{sec:Pn-vs-Pn'}
Now fix $d\in\N$, let $e_1,...,e_d$ be the standard unit vectors of
$\R^d$, and denote the unit simplex $\conv \{0,e_1,...,e_d\}$ by
$\sigma_d$. (For standard notions on convex polytopes we recommend
\cite{Henk97,Gruenbaum,Ziegler}; for lattice polytopes see
\cite{BeckRobins,GritzWills,Gruber,Schrijver,White}.)

Given a non-empty subset $S$ of $\R^d$ and a positive integer $n$,
we write
\[
nS := \{ns:s\in S\}\quad\textrm{and}\quad n*S := \{s_1+\dots
+s_n:s_1,...,s_n\in S\} = S + \dots + S
\]
for the ratio-$n$ homothetic copy and $n$-fold Minkowski sum of $S$,
respectively. For convenience, we also set $0 S := \{0\}$ and $0 * S
:= \{0\}$. It is easy to see that $nS=n*S$ holds for all $n \in \N$
if $S$ is convex. Indeed, the inclusion $nS \subseteq n*S$ is always
true, and, by convexity of $S$, $s_1+\dots +s_n$ can be written as
$ns$ with $s=(s_1+\dots +s_n)/n\in S$ for all $n \in \N$ and
$s_1,...,s_n \in S$. (Note that equality of $nS$ and $n*S$ for all
$n\in\N$ does not imply convexity of $S$, as $S=\Q$ shows.)

For a convex set $\Pi\subseteq\R^d$ containing at least two integer
points, it is clear that $(n\Pi)\cap\Z^d\ne n(\Pi\cap\Z^d)$ as soon
as $n>1$. But (as motivated in the introduction) a much more
interesting question is whether or not
\begin{equation} \label{eq:2enlargements}
(n\Pi)\cap\Z^d\ =\ n*(\Pi\cap\Z^d)
\end{equation}
is true for all $n\in\N$. We will study this question for certain
polytopes $\Pi$.

Let $v_1,...,v_k$ be points of $\Z^d$ with their affine hull equal
to $\R^d$ and put $\Pi=\conv\{v_1,...,v_k\}$. $\Pi$ is a so-called
{\sl lattice polytope} as all its vertices are in $\Z^d$. We will
suppose that there is no proper subset $I$ of $\{1,...,k\}$ with
$\Pi=\conv\{v_i:i\in I\}$, so that $v_1,...,v_k$ are the vertices of
$\Pi$. If $\Pi\cap\Z^d$ only consists of the vertices of $\Pi$ then
$\Pi$ is called an {\sl elementary polytope}
\cite{Kantor98,Schrijver} (or a {\sl lattice-(point-)free polytope}
\cite{Henk98,Kantor99}). The following lemma is fairly standard:

\begin{lemma} \label{lem:fullyoccupied}
Let $A\in\Z^{d\times d}$ be a matrix and $a_1,...,a_d\in\Z^d$ its columns.

{\bf a)} The following conditions are equivalent:

\begin{tabular}{rl}
  (i)&$A(\Z^d)=\Z^d$.\\
 (ii)&$A^{-1}$ is an integer matrix.\\
(iii)&$\det A=\pm 1$.\\
 (iv)&The parallelotope $A([0,1]^d)$ spanned by $a_1,...,a_d$ has volume $1$.\\
  (v)&The parallelotope $A([0,1]^d)$ is elementary.
\end{tabular}

{\bf b)} The condition

\begin{tabular}{rl}
 (vi)&The simplex $A(\sigma_d)=\conv\{0,a_1,...,a_d\}$ is elementary.
\end{tabular}

is necessary for $(i)$--$(v)$; it is moreover sufficient iff $d\in\{1,2\}$.
\end{lemma}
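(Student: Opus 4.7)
For part (a), the plan is to prove the chain (i)$\Leftrightarrow$(ii)$\Leftrightarrow$(iii)$\Leftrightarrow$(iv)$\Leftrightarrow$(v). The first two equivalences are routine linear algebra: the map $x\mapsto Ax$ is a bijection $\Z^d\to\Z^d$ iff $A^{-1}$ exists and sends $\Z^d$ into itself, i.e.\ is an integer matrix; and Cramer's rule $A^{-1}=(\det A)^{-1}\operatorname{adj}(A)$ together with $\det A\cdot\det A^{-1}=1$ in $\Z$ gives (ii)$\Leftrightarrow$(iii). The step (iii)$\Leftrightarrow$(iv) is the standard formula $\vol A([0,1]^d)=|\det A|$.

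For (iv)$\Leftrightarrow$(v), if $|\det A|=1$ then $a_1,\dots,a_d$ form a $\Z$-basis of $\Z^d$, so every lattice point in $A([0,1]^d)$ has the form $\sum n_i a_i$ with $n_i\in[0,1]\cap\Z=\{0,1\}$, giving precisely the $2^d$ vertices; hence the parallelotope is elementary. Conversely, if the closed parallelotope is elementary then the half-open parallelotope $A([0,1)^d)$ contains only the origin (all other vertices of $A([0,1]^d)$ have at least one coordinate equal to $1$); since $A([0,1)^d)$ is a fundamental domain for the sublattice $A(\Z^d)\subseteq\Z^d$ and therefore contains exactly $|\det A|$ points of $\Z^d$, we conclude $|\det A|=1$.

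For part (b), necessity of (vi) follows from the inclusion $\conv\{0,a_1,\dots,a_d\}\subseteq A([0,1]^d)$: if (v) holds, any lattice point of the simplex is a parallelotope-vertex $\sum_{i\in S}a_i$, and the simplex constraint $\sum t_i\le 1$ together with linear independence of the $a_i$ forces $|S|\le 1$, leaving only $\{0,a_1,\dots,a_d\}$. For sufficiency in low dimension, the case $d=1$ is trivial since the simplex equals the parallelotope, and for $d=2$ Pick's theorem applied to an elementary triangle gives area $0+\tfrac{3}{2}-1=\tfrac{1}{2}$, hence $|\det A|=1$.

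The main obstacle is ruling out sufficiency of (vi) for $d\ge 3$, which I would handle by exhibiting a Reeve-type counterexample. Taking $a_1=e_1,\,a_2=e_2,\,\dots,\,a_{d-1}=e_{d-1}$ and $a_d=e_1+e_2+2e_d$ yields $\det A=2$; yet parametrising lattice points of $\conv\{0,a_1,\dots,a_d\}$ via barycentric coordinates $t_i$ and using $t_d=x_d/2\in[0,1]$ together with $\sum t_i\le 1$ shows the only lattice points are the $d+1$ vertices. Thus (vi) holds while (iii) fails, confirming that Pick's theorem admits no higher-dimensional analogue capable of sustaining the equivalence beyond $d=2$.
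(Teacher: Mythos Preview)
Your argument is correct and, for part (a), matches the paper's (sketchy) reasoning essentially verbatim, only with more detail in the (iv)$\Leftrightarrow$(v) step via the fundamental-domain count.

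The genuine divergence is in part (b). For $d=2$ you invoke Pick's theorem to get area $\tfrac12$ directly, whereas the paper argues more elementarily by central symmetry of the parallelogram: if $x$ is an integer non-vertex point of $A([0,1]^2)$ then so is $a_1+a_2-x$, and one of the two must lie in the triangle $A(\sigma_2)$, contradicting (vi). The paper's route avoids importing an outside theorem and makes the geometry transparent; yours is shorter if Pick is taken for granted. For $d\ge 3$ you build your own Reeve-type counterexample with $a_d=e_1+e_2+2e_d$ and $\det A=2$, while the paper instead points to its Examples~\ref{ex:simpl}\,a) and b) (columns $(-1,\dots,-1,m)^\top$ or $(1,\dots,1,m)^\top$). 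Both constructions work; the paper's family has the advantage of giving arbitrarily large determinant, but for the lemma a single counterexample suffices, and yours is fine.
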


If $(i)$--$(v)$ hold then $A$ is called an {\sl integer unimodular
matrix} \cite{Gruber,Schrijver} and the simplex $A(\sigma_d)$ has
volume $1/d!$ and is sometimes called a {\sl primitive} (or {\sl unimodular})
{\sl simplex} (e.g. \cite{Kantor98}). So primitive simplices are elementary,
and the converse holds iff $d\in\{1,2\}$.

For the sake of completeness, we give a short sketch of the proof
of Lemma \ref{lem:fullyoccupied}:

\begin{proof}
Part a) follows by standard arguments using $\det A^{-1} = 1/\det A$
and $\vol(A([0,1]^d))=|\det A|\,\vol([0,1]^d)$. The implication
$(v)\Rightarrow (vi)$ holds by $\sigma_d\subseteq [0,1]^d$. For
$d=1$, the implication $(vi)\Rightarrow (v)$ is clear by $\sigma_1 =
[0,1]^1$. For $d=2$, if $x$ is an integer non-vertex point in
$A([0,1]^2)$, then also $a_1+a_2-x$ is an integer non-vertex point
in $A([0,1]^2)$. But one of the two points is in $A(\sigma_2)$, so
that $(vi)\Rightarrow (v)$ holds. For $d\ge 3$, there are elementary
but not primitive simplices (see Examples \ref{ex:simpl} a and b
below).
\end{proof}

Here are two slightly different constructions leading to elementary
but not primitive simplices in dimension $d\ge 3$.

\begin{example} \label{ex:simpl}
{\bf a) } Let $d\ge 3$, fix an $m\in\N$, take $a_1:=e_1,\ a_2:=e_2,\
...\ ,\,a_{d-1}:=e_{d-1}\in\Z^d$ and $a_d:=(-1,...,-1,m)^\top$, and
let $A\in\Z^{d\times d}$ be the matrix with columns $a_1,...,a_d$.
Then
\[
\Sigma_{d,m}\ :=\ A(\sigma_d)\ =\ \conv\{0,a_1,...,a_d\}
\]
is primitive iff $m=\det A=1$. On the other hand, the number of
integer points in $\Sigma_{d,m}$ apart from its $d+1$ vertices is
equal to $k=\lfloor m/d\rfloor$ (integer division), and these $k$
other integer points are $1e_d,...,ke_d$. (To see this, look at the
projection of $\Sigma_{d,m}$ to the hyperplane spanned by
$e_1,...,e_{d-1}$.) So for $m\in\{2,...,d-1\}$ we have an elementary
but not primitive simplex.

{\bf b) } If we change the last column in the above example from
$a_d=(-1,...,-1,m)^\top$ to $a'_d:=(1,...,1,m)^\top$ with $m\in\N$
and call the new matrix $A'$ then, again, the simplex
\[
\Sigma_{d,m}'\ :=\ A'(\sigma_d)\ =\ \conv\{0,a_1,...,a_{d-1},a'_d\}
\]
is not primitive for $m=\det A'\ge 2$ but now it is elementary for
all $m\in\N$. So this simplex $\Sigma'_{d,m}$ can have arbitrarily
large volume $m/d!$ without containing any integer points other than
its vertices. The simplices $\Sigma'_{d,m}$ were first considered
(because of this property) by Reeve \cite{Reeve} in case $d=3$
and have since been termed {\sl Reeve simplices}. There are results
that relate the maximal volume of an elementary polytope in $\R^d$
to its surface area \cite{BHW} or its inradius \cite{Henk98}.
\end{example}

Given a full-dimensional lattice polytope $\Pi\subseteq\R^d$ and a
set $\T$ of full-dimensional lattice simplices $S_1,...,S_m\subseteq
\Pi$ with
\[
\bigcup_{i=1}^m S_i\ =\ \Pi\qquad\textrm{and}\qquad
S_i\cap S_j\ \textrm{is a face of both $S_i$ and $S_j$},\ \forall i,j,
\]
then the set $\T=\{S_1,...,S_m\}$ is called a {\sl triangulation} of
$\Pi$. The triangulation $\T$ is called {\sl elementary} or {\sl
primitive} if all its elements $S_i$ are, respectively, elementary
or primitive simplices.

Here is our main result on the equality \eqref{eq:2enlargements}:

\begin{proposition} \label{prop:main}
If a full-dimensional lattice polytope $\Pi\subseteq\R^d$ possesses
a primitive triangulation then equality \eqref{eq:2enlargements}
holds for all $n\in\N$.
\end{proposition}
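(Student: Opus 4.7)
The inclusion $n*(\Pi\cap\Z^d) \subseteq (n\Pi)\cap\Z^d$ is immediate and holds for any convex set $\Pi$: if $x_1,\ldots,x_n\in\Pi\cap\Z^d$, then $x_1+\cdots+x_n\in\Z^d$ and equals $n\cdot\frac{1}{n}(x_1+\cdots+x_n)\in n\Pi$ by convexity of $\Pi$. So the entire content of the proposition lies in the reverse inclusion, and this is where I would spend all my effort.

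For the reverse inclusion, let $y\in(n\Pi)\cap\Z^d$, so that $y/n\in\Pi$. The plan is to localize $y/n$ inside some simplex of the given primitive triangulation $\T=\{S_1,\ldots,S_m\}$ and then exploit primitivity to extract integrality of the (scaled) barycentric coordinates. Pick $S\in\T$ with $y/n\in S$, and let $v_0,v_1,\ldots,v_d\in\Z^d$ be its vertices. Write
\[
\frac{y}{n}\ =\ \sum_{j=0}^d \lambda_j v_j, \qquad \lambda_j\ge 0,\ \sum_{j=0}^d \lambda_j=1,
\]
so that $y=\sum_{j=0}^d (n\lambda_j)\,v_j$ with non-negative real coefficients summing to $n$. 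The task reduces to showing that each $n\lambda_j$ is an integer: if so, taking $k_j:=n\lambda_j$ copies of each vertex $v_j$ expresses $y$ as a sum of exactly $n$ points from $\Pi\cap\Z^d$, as required.

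The integrality of the $n\lambda_j$ is precisely what primitivity buys us. By Lemma \ref{lem:fullyoccupied}, the edge vectors $a_j:=v_j-v_0$, $j=1,\ldots,d$, form the columns of a unimodular matrix and hence constitute a $\Z$-basis of $\Z^d$. Rewriting the above identity as
\[
y-nv_0\ =\ \sum_{j=1}^d (n\lambda_j)\,(v_j-v_0),
\]
and observing that the left-hand side is in $\Z^d$ while $\{v_j-v_0\}_{j=1}^d$ is a $\Z$-basis of $\Z^d$, we conclude that $n\lambda_j\in\Z$ for $j=1,\ldots,d$. Then $n\lambda_0=n-\sum_{j\ge 1}n\lambda_j\in\Z$ too. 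This closes the argument.

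The potential obstacle I see is making sure that one really may choose the containing simplex $S$ without any ambiguity harming the argument -- but since the above reasoning is carried out inside a single simplex and never uses uniqueness, points on shared faces cause no trouble (any choice of $S$ containing $y/n$ works). The core of the argument is thus the one-line observation that, for a primitive simplex, the $\Z$-module generated by the edge vectors from any vertex is the full integer lattice; everything else is bookkeeping with barycentric coordinates.
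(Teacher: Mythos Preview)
Your proof is correct and follows essentially the same route as the paper's: localize $y/n$ in a simplex of the primitive triangulation, write barycentric coordinates, and use unimodularity to force the scaled coefficients $n\lambda_j$ to be integers. The only cosmetic difference is that the paper packages the integrality argument via the augmented $(d{+}1)\times(d{+}1)$ matrix $M$ with a row of $1$'s and shows $\det M=\pm 1$, whereas you subtract $nv_0$ and work directly with the $d\times d$ edge-vector matrix (handling $n\lambda_0$ separately); these are equivalent, and indeed the paper's determinant computation reduces to exactly your edge-vector determinant.
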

\begin{proof}
Let $n\in\N$ and $x\in n*(\Pi\cap\Z^d)$. Then $x=p_1+...+p_n$ for
some $p_1,...,p_n\in \Pi\cap\Z^d$, so that $x\in\Z^d$ and $x=np$
with $p=(p_1+...+p_n)/n$. But $p\in \Pi$ by convexity of $\Pi$.

Now let $x\in (n\Pi)\cap\Z^d$, so $x=np$ is an integer point with
$p\in \Pi$. Let $\T=\{S_1,...,S_m\}$ be a primitive triangulation of
$\Pi$ and let $w_0,...,w_d$ be the vertices of a simplex $S_i$ that
contains $p$. Now there is a unique way to write $p$ as a convex
combination of $w_0,...,w_d$. So there are $\alpha_0,...,\alpha_d\in
[0,1]$ so that $p=\alpha_0 w_0+...+\alpha_d w_d$ and
$\alpha_0+...+\alpha_d=1$. Together with $x=np$ this implies
\begin{equation} \label{eq:convcomb}
\left(\begin{array}{cccc} |&|& & |\\
w_0&w_1&\cdots&w_d\\
| & |& & |\\[0.8ex] \hline \\[-2ex]1 & 1 &\cdots & 1
\end{array}\right)
\left(\begin{array}{c} n\alpha_0\\ n\alpha_1\\ \vdots\\
n\alpha_d\end{array}\right) \ =\ n\left(\begin{array}{c} |\\p\\
|\\[0.8ex] \hline \\[-2ex] 1\end{array}\right)
\ =\ \left(\begin{array}{c} |\\x\\
|\\[0.8ex] \hline \\[-2ex] n\end{array}\right).
\end{equation}
If we refer to the matrix in \eqref{eq:convcomb} as $M$ then, after
subtracting the first column from all the others and then expanding
by the last row,
\begin{eqnarray*}
\det M &=& \det \left(\begin{array}{cccc} |&|& & |\\
w_0&w_1-w_0&\cdots&w_d-w_0\\
| & |& & |\\[0.8ex] \hline \\[-2ex]1 & 0 &\cdots & 0
\end{array}\right)\\
& = & (-1)^{d+1} \det \left(\begin{array}{ccc} |& & |\\
w_1-w_0&\cdots&w_d-w_0\\
|& & |
\end{array}\right)\ \in\ \{\pm 1\}
\end{eqnarray*}
by Lemma \ref{lem:fullyoccupied} since $w_1-w_0,\,\cdots,w_d-w_0$
span the primitive simplex $S_i-w_0$. So $M^{-1}$ exists and is an
integer matrix. By \eqref{eq:convcomb}, it follows that
$\beta_0:=n\alpha_0,\,\cdots\,,\,\beta_d:=n\alpha_d$ are integers
since $M^{-1}$ and $x$ have integer entries. Summarizing, we get
that
\begin{equation} \label{eq:herewego}
\beta_0 w_0\,+\,\beta_1 w_1\,...\,+\,\beta_d w_d\ =\ x,
\end{equation}
where $\beta_0,...,\beta_d\in\{0,...,n\}$ and
$\beta_0+...+\beta_d=n$, so that \eqref{eq:herewego} is the desired
decomposition of $x$ into a sum of $n$ elements from $\Pi\cap\Z^d$.
\end{proof}

It is not clear to us whether the existence of a primitive triangulation
is necessary for equality \eqref{eq:2enlargements} to hold for all $n\in\N$.
(Is it possible that every $p\in\Pi$ is contained in a primitive simplex
$S(p)\subset\Pi$ without the existence of a ``global'' primitive triangulation
of $\Pi$?)


It is not hard to see that every lattice polytope $\Pi$ possesses an
elementary triangulation. (Existence of a triangulation can be shown
by induction over the number of vertices of $\Pi$, and every
non-elementary simplex $S_i$ can be further triangulated with
respect to its integer non-vertex points.) Existence of a primitive
triangulation however is a different question -- at least in
dimensions $d\ge 3$.

\begin{corollary}
If $\Pi$ is a full-dimensional lattice polytope in $\R^d$ with
$d\in\{1,2\}$ then equality \eqref{eq:2enlargements} holds for all
$n\in\N$.
\end{corollary}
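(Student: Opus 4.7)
The plan is to reduce this immediately to Proposition \ref{prop:main}: it suffices to exhibit a primitive triangulation of $\Pi$. By Lemma \ref{lem:fullyoccupied}(b), in dimensions $d\in\{1,2\}$ the notions of elementary simplex and primitive simplex coincide, so any elementary triangulation of $\Pi$ will be primitive, and the conclusion will follow from Proposition \ref{prop:main}.

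Thus the only thing to verify is existence of an elementary triangulation, which is the assertion recalled in the paragraph preceding the corollary. For $d=1$ this is trivial: $\Pi=[a,b]$ with $a,b\in\Z$, and the decomposition into the unit intervals $[a,a+1],\,[a+1,a+2],\,\ldots,[b-1,b]$ is an elementary (hence primitive) triangulation. For $d=2$ one first obtains some triangulation $\{S_1,\ldots,S_m\}$ of $\Pi$ into lattice triangles (e.g.\ by induction on the number of vertices, ear-clipping the polygon), and then refines each $S_i$ that contains an integer point $x$ in its interior or on the relative interior of an edge by joining $x$ to the appropriate vertices; the resulting simplices strictly decrease in area (which is a positive multiple of $1/2$ by Pick's theorem), so iterating this finitely many times produces a triangulation in which every triangle is elementary, i.e.\ lattice-point-free apart from its vertices.

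Combining these observations with Lemma \ref{lem:fullyoccupied}(b), the resulting triangulation is primitive, and Proposition \ref{prop:main} gives \eqref{eq:2enlargements} for all $n\in\N$. The only substantive step is the existence of the elementary triangulation in the two-dimensional case; it is routine but relies on the fact that in dimension $2$ one can always split off non-vertex integer points while keeping the incidence condition on faces.
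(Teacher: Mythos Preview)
Your proof is correct and follows exactly the paper's approach: reduce to Proposition \ref{prop:main} by noting that every lattice polytope admits an elementary triangulation and that, by Lemma \ref{lem:fullyoccupied}(b), elementary simplices are primitive when $d\in\{1,2\}$. The paper's version is terser, simply citing the preceding paragraph for existence of an elementary triangulation, whereas you spell out the $d=1$ and $d=2$ cases explicitly.
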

\begin{proof}
Every lattice polytope has an elementary triangulation. In
dimensions $d\in\{1,2\}$, by Lemma \ref{lem:fullyoccupied} b), an
elementary triangulation is always primitive. Now apply Proposition
\ref{prop:main}.
\end{proof}

In dimension $d\ge 3$, it is generally a difficult question whether
or not a given lattice polytope $\Pi$ has a primitive triangulation
(see e.g. \cite{BetkeKneser,Kantor98}). The simplices $\Sigma_{d,m}$
in Example \ref{ex:simpl} a) with $m\in\{2,...,d-1\}$ and
$\Sigma_{d,m}'$ in \ref{ex:simpl} b) with $m\in\{2,3,...\}$ are
examples of lattice polytopes that have no primitive triangulation.
They are also examples, where \eqref{eq:2enlargements} is not valid
for general $n\in\N$. For example, $2\Sigma_{3,2}$ contains $e_3$,
which cannot be written as the sum of two integer points from
$\Sigma_{3,2}$. It is even possible to give examples $\Pi$ where,
for a given $k\in\N$, \eqref{eq:2enlargements} starts to fail at
$n>k$ while holding true for $n=1,2,...,k$. As an example, take
$\Pi=\Sigma_{2k+1\,,\,2}$.

The more specific question posed in the introduction is whether the
fact that $\Omega:=\Pi\cap \Z^d$ ~ (i) contains the origin, (ii) is
symmetric (i.e. $-x\in\ \Omega$ if $x\in \Omega$), and (iii)
generates $\Z^d$, i.e.
\begin{equation} \label{eq:generates}
\bigcup_{n\in\N} n*\Omega\ =\ \Z^d,
\end{equation}
guarantees equality \eqref{eq:2enlargements} for all $n\in\N$. But
also that has to be answered in the negative, as the example
$\Pi=\conv(\Sigma_{3,3}\,\cup\,-\Sigma_{3,3})=\conv\{\pm e_1\,,\,
\pm e_2\,,\, \pm(-1,-1,3)^\top\}\subseteq\R^3$ shows. Indeed,
$(-1,-1,1)^\top$ is in $2\Pi$ but not in $2*\Omega$ with
$\Omega=\Pi\cap\Z^3=\{\pm e_1\,,\, \pm e_2\,,\, \pm
e_3\,,\,\pm(-1,-1,3)^\top\,,\, 0\}$.

After all these examples, here are some results on the positive
side:

The symmetric hypercube $[-1,1]^d$ is the union of $2^d$ shifted
copies of $[0,1]^d$, each of which has a primitive triangulation.
The cross-polytope $\conv\{\pm e_1,...,\pm e_d\}$ triangulates into
$2^d$ unit simplices around the origin. Much more involved, there is
the following result by Kempf, Knudsen, Mumford and Saint-Donat
\cite{KMSD}:

\begin{lemma} \label{lem:triang}
For every lattice polytope $\Pi\subseteq\R^d$, there is an integer
$k\in\N$ such that $k\Pi$ possesses a primitive triangulation.
\end{lemma}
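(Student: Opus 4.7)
The plan is to prove the lemma by induction on the dimension $d$, after reducing to the case of a single lattice simplex. Every lattice polytope admits an elementary triangulation $\T = \{S_1, \ldots, S_m\}$, as noted in the text preceding the statement. If for each $S_i$ I can produce a positive integer $k_i$ such that $k_i S_i$ has a primitive triangulation, and if these triangulations can be chosen to agree on shared faces after passing to a common multiple, then $k := \operatorname{lcm}(k_1, \ldots, k_m)$ yields a primitive triangulation of $k\Pi$.

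The base cases $d \in \{1,2\}$ are immediate from Lemma~\ref{lem:fullyoccupied}(b): every elementary simplex in those dimensions is already primitive, so $k = 1$ works. For $d \ge 3$ the strategy is as follows. Given a lattice simplex $S\subseteq \R^d$, apply the induction hypothesis to each of its $(d-1)$-dimensional facets to obtain, after selecting a common dilation factor $k_1$, a primitive triangulation of $\partial(k_1 S)$ that is consistent on all shared sub-faces. The remaining task is to extend this boundary triangulation into the interior of $k_1 S$, possibly after a further dilation by some integer $k_2$, while preserving primitivity.

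The interior-filling step is the main obstacle. Coning from an arbitrary interior lattice point does not generally yield primitive simplices, and Example~\ref{ex:simpl}(b) shows that elementary simplices — in which no interior lattice point exists at all — can have arbitrarily large volume. The resolution is to perform iterated subdivisions, for instance stellar subdivisions with respect to carefully chosen primitive lattice vectors interleaved with further integer scalings, while controlling a complexity invariant — such as the normalized volume, or the lattice index of the cone spanned by the edge vectors — that strictly decreases at each step and bottoms out exactly at primitive simplices. After finitely many such refinements every piece becomes primitive, and the product of the dilation factors used along the way yields the required $k$.

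The combinatorial execution of this interior step is intricate. The cleanest argument, due to Kempf, Knudsen, Mumford and Saint-Donat \cite{KMSD}, reformulates the problem in toric geometry: primitivity of the triangulation corresponds to smoothness of the toric variety associated to the fan over $k\Pi$, and the existence of $k$ becomes a resolution-of-singularities statement in that setting. For the technical heart of the argument I would therefore appeal directly to their construction rather than attempting a fully self-contained derivation here.
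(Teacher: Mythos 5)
The paper does not actually prove Lemma \ref{lem:triang}: it is quoted as a known theorem, with \cite{KMSD} serving as the proof. Since your proposal also ends by appealing to \cite{KMSD} ``for the technical heart'', you land in the same place as the paper --- but note that what you defer is not a sub-step. The KMSD theorem applies directly to an arbitrary lattice polytope (their construction triangulates the whole object at once), so the deferred ``technical heart'' is the entire statement of the lemma, and the reductions you build before the citation do no mathematical work.

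Worse, those reductions contain genuine gaps, so they could not be upgraded into an independent proof without new ideas. First, the lcm step is unjustified: a primitive triangulation of $k_i S_i$ does not remain primitive after a further dilation by $k/k_i$, since dilating a unimodular simplex destroys unimodularity; one must re-triangulate each dilated simplex (this is possible, via the standard triangulation of $m\Delta$ for unimodular $\Delta$, which is moreover face-compatible, but that is a lemma you would have to state and prove rather than absorb into the word ``lcm''). Second, agreement of the triangulations of adjacent pieces $kS_i$ and $kS_j$ on their common face is precisely the crux of the whole problem, and you assume it (``if these triangulations can be chosen to agree'') rather than prove it; KMSD sidestep this by constructing regular (projective) subdivisions of the entire complex, for which face compatibility is automatic. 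Third, the induction-on-dimension scheme --- triangulate the boundary primitively, then extend inward after a further dilation $k_2$ --- fails as stated: the dilation by $k_2$ invalidates the boundary triangulation you just built, and extending a prescribed unimodular boundary triangulation to the interior without refining the boundary is not known to be possible in general, which breaks the induction. The ``complexity invariant that strictly decreases'' is likewise an aspiration, not an argument; making it precise is exactly the content of the KMSD proof. The honest version of your proof, and the one the paper itself uses, is a one-line citation of \cite{KMSD}.
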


Consequently, for every lattice polytope $\Pi$, there is a $k\in\N$
such that $k\Pi$ satisfies \eqref{eq:2enlargements} in place of
$\Pi$ for all $n\in\N$.

\begin{remark}
The two conditions \eqref{eq:generates} with $\Omega:=\Pi\cap\Z^d$,
which says that $\Omega$ generates all of $\Z^d$, and
$0\in\inte(\Pi)$, which implies that $n*\Omega\subseteq
(n+1)*\Omega$, are connected with each other and with the validity
of \eqref{eq:2enlargements} for all $n\in\N$. Firstly, if
\eqref{eq:2enlargements} holds for all $n\in\N$ and $0\in\inte(\Pi)$
then
\[
\bigcup_{n\in\N} n*(\Pi\cap\Z^d)\ =\ \bigcup_{n\in\N} (n\Pi)\cap\Z^d
\ =\ \left(\bigcup_{n\in\N} n\Pi\right)\cap\Z^d\ =\ \Z^d
\]
 so that \eqref{eq:generates} holds.
On the other hand, from \eqref{eq:generates} it follows, by the
trivial inclusion ``$\supseteq$'' in \eqref{eq:2enlargements}, that
\[
\Z^d\ =\ \bigcup_{n\in\N} n*(\Pi\cap\Z^d)\ \subseteq\
\bigcup_{n\in\N} (n\Pi)\cap\Z^d \ =\ \left(\bigcup_{n\in\N}
n\Pi\right)\cap\Z^d\ \subseteq\ \Z^d,
\]
whence, by the convexity of $\Pi$, $\cup\, n\Pi=\R^d$ and hence
$0\in\inte(\Pi)$.
\end{remark}
\section{Boundaries of subsets of a group $\Gamma$} \label{sec:bdry}
Let $\Gamma$ be a finitely generated discrete group with identity
element $e$. We are going to introduce some notions of topological
type. Note that the standard topology on $\Gamma$ is the discrete
one; so every subset of $\Gamma$ is open with respect to this
topology.

Let $\Omega$ be a finite subset of $\Gamma$ which contains the
identity element $e$ and which generates $\Gamma$ as a semi-group,
i.e., if we set $\Omega_0 := \{e\}$ and if we let $\Omega_n$ denote
the set of all words of length at most $n$ with letters in $\Omega$
for $n \ge 1$, then $\cup_{n \ge 0} \Omega_n = \Gamma$. Note also
that the sequence $(\Omega_n)$ is increasing; so the operators
$P_{\Omega_n}$ can play the role of the finite section projections
$P_{Y_n}$ from the introduction, and in fact we will obtain some of
the subsequent results exactly for this sequence.

With respect to $\Omega$, we define the following
``algebro-topological'' notions. Let $A \subseteq \Gamma$. A point
$a \in A$ is called an {\sl $\Omega$-inner} point of $A$ if $\Omega
a := \{ \omega a : \omega \in \Omega \} \subseteq A$. The set
$\mbox{int}_\Omega A$ of all $\Omega$-inner points of $A$ is called
the {\sl $\Omega$-interior} of $A$, and the set $\partial_\Omega A
:= A \setminus \mbox{int}_\Omega A$ is the {\sl $\Omega$-boundary}
of $A$. Note that we consider the $\Omega$-boundary of a set always
as a part of that set. (In this point, the present definition of a
boundary differs from other definitions in the literature; see
\cite{Ada1} for instance.) One easily checks that
\begin{equation} \label{e030309.5}
\Omega_{n-1}\, \subseteq\, \mbox{int}_\Omega \Omega_n\, \subseteq\,
\Omega_n \quad \mbox{and} \quad
\partial_\Omega \Omega_n\, \subseteq\, \Omega_n \setminus \Omega_{n-1}
\end{equation}
for each $n \ge 1$.

Recall from \cite{Roch:FSMgroup} that there are at least two reasons
for the interest in the boundaries $\partial_\Omega \Omega_n$:
\begin{itemize}
\item
The sequence $(P_{\partial_\Omega \Omega_n})_{n \ge 1}$ belongs to
the $C^*$-algebra $\cS ({\sf Sh} (\Gamma))$ which is generated by
all finite sections sequences $(P_{\Omega_n} A P_{\Omega_n})_{n \ge
1}$ where $A$ runs through the operators on $l^2(\Gamma)$ of left
shift by elements in $\Gamma$ (i.e., they are given by the
left-regular representation of $\Gamma$ on $l^2(\Gamma)$), and it
generates the quasicommutator ideal of that algebra.
\item
There is a criterion for the stability of sequences in $\cS ({\sf
Sh} (\Gamma))$ which can be formulated by means  of limit operators,
and it turns out that it is sufficient to consider limit operators
with respect to sequences taking their values in the boundaries
$\partial_\Omega \Omega_n$.
\end{itemize}
For details, see \cite{Roch:FSMgroup}. In many instances one
observes that the ``algebraic'' boundary $\partial_\Omega \Omega_n$
coincides with the ``numerical'' boundary $\Omega_n \setminus
\Omega_{n-1}$; in fact, one inclusion holds in general as mentioned
in (\ref{e030309.5}). We will see now that the reverse inclusion can
be guaranteed if $\Gamma = \sZ^d$ and if $\Omega$ arises from a
lattice polytope $\Pi$ such that \eqref{eq:2enlargements} holds for
all $n\in\N$.
\begin{proposition} \label{01032010:p1}
Let $\Pi$ be a lattice polytope in $\sZ^d$ which satisfies
\eqref{eq:2enlargements} and set $\Omega := \Pi \cap \sZ^d$. Then
\begin{equation} \label{01032010:e2}
\partial_\Omega (n * \Omega) = (n * \Omega) \setminus ((n-1) * \Omega)
\end{equation}
holds for all positive integers $n$.
\end{proposition}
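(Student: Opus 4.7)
The plan is to prove the two inclusions separately. The forward direction $\partial_\Omega(n*\Omega)\subseteq(n*\Omega)\setminus((n-1)*\Omega)$ is already contained in \eqref{e030309.5}: because $0\in\Omega$, every word of length at most $k$ over $\Omega$ can be padded with copies of $0$ to a word of length exactly $k$, so the set $\Omega_k$ from Section \ref{sec:bdry} coincides with $k*\Omega$ in our additive setting, and \eqref{e030309.5} then reads exactly as the desired inclusion.

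For the reverse inclusion I would argue by contrapositive: pick $x\in n*\Omega$ with $x\in\mbox{int}_\Omega(n*\Omega)$, i.e.\ $\Omega+x\subseteq n*\Omega$, and show $x\in(n-1)*\Omega$. The hypothesis \eqref{eq:2enlargements} rewrites the assumption as $\Omega+x\subseteq n\Pi\cap\Z^d$, hence $\Omega+x\subseteq n\Pi$. Every vertex of $\Pi$ is a lattice point in $\Pi$ and therefore lies in $\Omega$, so the vertex set $V$ of $\Pi$ satisfies $V+x\subseteq n\Pi$; writing an arbitrary $p\in\Pi=\conv V$ as a convex combination of vertices and using convexity of $n\Pi$, this extends to $\Pi+x\subseteq n\Pi$.

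The decisive convex-geometric step is to deduce $x\in(n-1)\Pi$ from $\Pi+x\subseteq n\Pi$. In terms of support functions, the latter inclusion is equivalent to $h_\Pi(u)+\langle x,u\rangle\le n\,h_\Pi(u)$ for every $u\in\R^d$, i.e.\ $\langle x,u\rangle\le(n-1)h_\Pi(u)=h_{(n-1)\Pi}(u)$ for every $u$, which is precisely the support-function criterion for $x\in(n-1)\Pi$ (the degenerate case $n=1$ reducing to $\langle x,u\rangle\le 0$ for all $u$, hence to $x=0\in 0*\Omega$). Since $x\in\Z^d$, a second application of \eqref{eq:2enlargements} then yields $x\in(n-1)\Pi\cap\Z^d=(n-1)*\Omega$, as required. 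The only real obstacle is building the dictionary between the algebraic condition $\Omega+x\subseteq n*\Omega$ and the geometric condition $\Pi+x\subseteq n\Pi$ by means of \eqref{eq:2enlargements}; once that bridge is in place, the support-function computation and the passage from vertices to all of $\Pi$ are entirely standard.
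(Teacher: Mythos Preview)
Your proof is correct and follows the same overall route as the paper: use \eqref{e030309.5} for one inclusion, and for the other pass via \eqref{eq:2enlargements} to the continuous fact that $\Pi+x\subseteq n\Pi$ is equivalent to $x\in(n-1)\Pi$, noting (by convexity) that only the vertices of $\Pi$ need to be tested. The sole difference is in how that convex-geometric equivalence is established: you argue with support functions, whereas the paper writes $x=t\omega$ with $\omega\in\partial\Pi$ and $n-1<t\le n$ and observes directly that $x+\omega=(t+1)\omega\notin n\Pi$.
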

\begin{proof} As mentioned above, it is sufficient to show that
\[
(n * \Omega) \setminus ((n-1) * \Omega) \subseteq \partial_\Omega (n * \Omega).
\]
We start with working on the continuous level and check first the implication
\begin{equation} \label{01032010:e3}
\mbox{If} \quad x \in n \Pi \setminus (n-1) \Pi, \quad \mbox{then} \quad \Pi + x \not\subset n \Pi.
\end{equation}
Indeed, write $x$ as $t \omega$ with $\omega \in \partial \Pi$ (=
the usual topological boundary of $\Pi$) and $n-1 < t \le n$. Then
$x + \omega = (t+1) \omega$ with $t+1 > n$, whence $x + \omega
\not\in n \Pi$.

In the next step we show that $\omega$ can be chosen such that $x +
\omega$ becomes a grid point for $x$ a grid point. Indeed, let $x
\in \left( n \Pi \setminus (n-1) \Pi \right) \cap \sZ^d$. Consider
the points $x + \omega_i$ where the $\omega_i$, $i = 1, \ldots, k$,
run through the (integer) vertices of $\Pi$. If we would have $x +
\omega_i \in n \Pi$ for each $i$, then we would have
\[
x + \Pi = \conv \{x + \omega_i : i = 1, \ldots, k \} \subseteq n \Pi
\]
by convexity of $\Pi$, which contradicts (\ref{01032010:e3}). Hence,
for each $x \in \left( n \Pi \setminus (n-1) \Pi \right) \cap
\sZ^d$, there is a vertex $\omega_i$ of $\Pi$ such that
\[
x + \omega_i \in \left( (n+1) \Pi \setminus n \Pi \right) \cap \sZ^d.
\]
Employing the assumption \eqref{eq:2enlargements} we conclude that,
for each $x \in n* \Omega \setminus (n-1)* \Omega$ there is a
$\omega_i \in \Omega$ such that $x + \omega_i \in (n+1) * \Omega
\setminus n * \Omega$. Hence, $x$ is in the $\Omega$-boundary of $n
* \Omega$.
\end{proof}

We proceed with an example which shows that the generalized version
of (\ref{01032010:e2}),
\begin{equation} \label{01032010:e2a}
\partial_\Omega \Omega_n = \Omega_n \setminus \Omega_{n-1}\,,
\end{equation}
does not hold for general subsets $\Omega$ of a finitely generated
discrete group $\Gamma$ and $n\in\N$. Consider the matrices
\[
\omega_0 := \begin{pmatrix}
1 & 0 \\
0 & 1
\end{pmatrix}, \qquad
\omega_1 := \begin{pmatrix}
0 & 1 \\
1 & 0
\end{pmatrix}, \qquad
\omega_2 := \begin{pmatrix}
1 & 1 \\
0 & 1
\end{pmatrix}
\]
and
\[
\omega_3 := \omega_2 \omega_1 = \begin{pmatrix}
1 & 1 \\
1 & 0
\end{pmatrix}, \qquad
\omega_4 := \omega_2^{-1} = \begin{pmatrix}
1 & -1 \\
0 & 1
\end{pmatrix}, \qquad
\omega_5 := \omega_4 \omega_1 = \begin{pmatrix}
-1 & 1 \\
1 & 0
\end{pmatrix}.
\]
Then $\Omega := \{\omega_i : i = 0, \ldots, 5 \}$ generates the
group $GL(2, \, \sZ)$ as a semi-group (clearly, $\Omega$ is not
minimal as a generating system: $\omega_0, \, \omega_1, \, \omega_2,
\, \omega_4$ already generate this group). One easily checks that
$\omega_0 \omega_1 = \omega_1, \; \omega_1 \omega_1 = \omega_0, \;
\omega_2 \omega_1 = \omega_3, \; \omega_3 \omega_1 = \omega_2, \;
\omega_4 \omega_1 = \omega_5$ and $\omega_5 \omega_1 = \omega_4$,
whence $\Omega \omega_1 \subseteq \Omega$. Thus, $\omega_1 \in
\Omega_1 \setminus \Omega_0$, but $\omega_1 \not\in \partial_\Omega
\Omega_1$. So, (\ref{01032010:e2a}) is violated already for $n = 1$.


Let us conclude with a curious consequence of the coincidence
(\ref{01032010:e2}) of the boundaries. We mentioned above that the
sequence $(P_{\partial_\Omega \Omega_n})_{n \ge 1}$ always belongs
to the $C^*$-algebra $\cS ({\sf Sh} (\Gamma))$ generated by the
finite sections sequences $(P_{\Omega_n} A P_{\Omega_n})_{n \ge 1}$
where $A$ is constituted by operators of left shift by elements of
$\Gamma$. Under the conditions of Proposition \ref{01032010:p1}, we
conclude that the sequence $(P_{\Omega_n} - P_{\Omega_{n-1}})$
belongs to $\cS ({\sf Sh} (\sZ^d))$. In particular, the sequence
$(P_{\Omega_{n-1}}) = (P_{\Omega_n}) - (P_{\Omega_n} -
P_{\Omega_{n-1}})$ belongs to $\cS ({\sf Sh} (\sZ^d))$.
Consequently, with each sequence $(P_{\Omega_n} A P_{\Omega_n})_{n
\ge 1}$, the sequence
\[
(P_{\Omega_{n-1}}) \, (P_{\Omega_n} A P_{\Omega_n}) \, (P_{\Omega_{n-1}})
= (P_{\Omega_{n-1}} A P_{\Omega_{n-1}})
\]
(with the operators $P_{\Omega_{n-1}} A P_{\Omega_{n-1}}$ considered
as acting on the range of $P_{\Omega_n}$) also belongs to $\cS ({\sf
Sh} (\sZ^d))$. In particular, the algebra $\cS ({\sf Sh} (\sZ^d))$
contains a shifted copy (hence, infinitely many shifted copies) of
itself. The same fact clearly holds for every algebra which is
generated by finite sections sequences $(Q_n A Q_n)$ and contains
the sequence $(Q_n - Q_{n-1})$. A less trivial example where this
happens is the algebra of the finite sections method for operators
in (a concrete representation of) the Cuntz algebra ${\sf O}_N$ with
$N \ge 2$ \cite{Roch:Cuntz}.

\bigskip

\noindent {\bf Authors:}\\[4mm]
Marko Lindner\hfill {\tt marko.lindner@mathematik.tu-chemnitz.de}\\
TU Chemnitz\\
Fakult\"at Mathematik\\
D-09107 Chemnitz\\
GERMANY\\[8mm]
Steffen Roch\hfill {\tt roch@mathematik.tu-darmstadt.de}\\
TU Darmstadt\\
Fachbereich Mathematik\\
Schlossgartenstr. 7\\
D-64289 Darmstadt\\
GERMANY

\end{document}